\documentclass[11pt]{amsart}
\usepackage[utf8]{inputenc}
\usepackage{amsmath, amssymb, mathtools, palatino, euler, epic, eepic,floatflt, microtype}
\usepackage{graphicx}
\usepackage[usenames]{xcolor}
\usepackage{float}
\usepackage{lipsum}
\usepackage{fancyhdr, enumitem, tikz}

\usepackage[normalem]{ulem}
\usepackage{mathrsfs}
\usepackage{tikz}
\usepackage{systeme}

\usepackage[margin=1 in]{geometry}
\geometry{letterpaper}  

\parindent=10pt
\parskip=5pt
\linespread{1.12}

\setcounter{MaxMatrixCols}{20}

\newcommand\defi[1]{\textit{\color{blue}#1}}    

\definecolor{darkblue}{rgb}{0,0,0.7}
\definecolor{darkred}{rgb}{0.7,0,0}
\usepackage[colorlinks,linkcolor=darkred, citecolor=darkblue,
pagebackref=true,pdftex]{hyperref}

\newtheorem{thm}{Theorem}[section]
\newtheorem{prop}[thm]{Proposition}
\newtheorem{lemma}[thm]{Lemma}
\newtheorem{conj}[thm]{Conjecture}
\newtheorem{cor}[thm]{Corollary}

\newtheorem{defn}[thm]{Definition}
\newtheorem{rem}[thm]{Remark}
\newtheorem{example}[thm]{Example}

\newtheorem{lem}[thm]{Lemma}

\newcommand\lk{\operatorname{\text{$\ell$k}}}   
\newcommand\lkc{\operatorname{\lk_{\Delta}}}
\newcommand\stc{\operatorname{\text{star}_{\Delta}}}
\newcommand\delc{\operatorname{\text{del}_{\Delta}}}
\newcommand\del{\operatorname{\text{del}}}
\newcommand\Cl{\operatorname{\text{Cl}}}

\newcommand\lkcp{\operatorname{\lk_{\Delta^\prime}}}

\newcommand\delcp{\operatorname{\text{del}_{\Delta^\prime}}} 

\newcommand{\M}{\mathcal{M}}

\title[Completing and extending shellings]{Completing and extending shellings \\
of vertex decomposable complexes}

\author{Michaela Coleman}
\author{Anton Dochtermann}
\author{Nathan Geist}
\author{Suho Oh}

\address{Department of Mathematics, University of Georgia, Athens, GA}
\email{mc08274@uga.edu}

\address{Department of Mathematics, Texas State University}
\email{dochtermann@txstate.edu}

\address{Department of Mathematics, Duke University, Durham, NC}
\email{nathan.geist@duke.edu}

\address{Department of Mathematics, Texas State University}
\email{s\_o79@txstate.edu}

\date{\today}

\begin{document}

\begin{abstract}
We say that a pure $d$-dimensional simplicial complex $\Delta$ on $n$ vertices is \emph{shelling completable} if $\Delta$ can be realized as the initial sequence of some shelling of $\Delta_{n-1}^{(d)}$, the $d$-skeleton of the $(n-1)$-dimensional simplex. A well-known conjecture of Simon posits that any shellable complex is shelling completable. 

In this note we prove that vertex decomposable complexes are shelling completable.  In fact we show that if $\Delta$ is a vertex decomposable complex then there exists an ordering of its ground set $V$ such that adding the revlex smallest missing $(d+1)$-subset of $V$ results in a complex that is again vertex decomposable. We explore applications to matroids and shifted complexes, as well as connections to ridge-chordal complexes and $k$-decomposability.  We also show that if $\Delta$ is a $d$-dimensional complex on at most $d+3$ vertices then the notions of shellable, vertex decomposable, shelling completable, and extendably shellable are all equivalent. 
\end{abstract}

\maketitle




\section{Introduction}\label{sec:intro}
A pure simplicial complex $\Delta$ is \emph{$k$-decomposable} if it is a simplex, or it admits a face of dimension at most $k$ whose deletion and link results in a complex that is again in the class. The case $k=0$ is also known as \emph{vertex decomposable} (see Definition \ref{def:vd}) and is an important class in many combinatorial settings. Examples of vertex decomposable complexes include the independence complexes of matroids (as shown by Provan and Billera in \cite{ProBil}, where the class of $k$-decomposable was introduced). It is also not hard to see that for any $k = 1, 2, \dots, n-1$ the $k$-skeleton of a simplex on vertex set $[n]$, which we denote $\Delta_{n-1}^{(k)}$, is vertex decomposable.

By definition a $k$-decomposable complex is $(k+1)$-decomposable, and a pure $d$-dimensional complex is $d$-decomposable complex if and only if it is \emph{shellable}.  Shellability of $\Delta$ can also be characterized in terms of the existence of an ordering of its facets $F_1, F_2, \dots, F_s$ that satisfies a codimension one intersection property (see Definition \ref{defn:shellable}). Shellability itself is an important combinatorial tool that has consequences for the topology of $\Delta$ as well as algebraic properties of its Stanley-Reisner ring.  By results of Bruggesser and Mani \cite{BruMan}, a large class of shellable simplicial complexes come from the boundary complexes of simplicial polytopes.


Given a pure $k$-decomposable complex $\Delta$, a natural question to ask is whether one can add a new facet to $\Delta$ that results in a complex that is still $k$-decomposable. In the case of shellable complexes this question was asked by Simon in \cite{Sim}, in the context of the related notion of extendable shellability.   A pure shellable complex $\Delta$ is said to be \emph{extendably shellable} if any shelling of a pure subcomplex of $\Delta$ can be extended to a shelling of $\Delta$.  Here a subcomplex of $\Delta$ is a simplicial complex on the same vertex set as $\Delta$, whose set of facets is a subset of the facets of $\Delta$.  

Although shellable complexes arise naturally in many contexts, it seems that extendably shellable complexes are harder to come by. Results of Danaraj and Klee \cite{DanKle} imply that any $2$-dimensional triangulated sphere (which is necessarily polytopal) is extendably shellable, and Kleinschmidt \cite{Kle} has shown that any $d$-dimensional sphere with $d+3$ vertices is extendably shellable. Bj\"orner and Eriksson \cite{BjoEri} proved that independence complexes of rank 3 matroids are extendably shellable. On the other hand, Ziegler \cite{Zie} has shown that there exist simplicial 4-polytopes that are not extendably shellable. The inspiration for much of our work will be the following question posed by Simon \cite{Sim}.

\begin{conj}[Simon's conjecture] \cite{Sim}
The complex $\Delta_{n-1}^{(k)}$ is extendably shellable. 
\end{conj}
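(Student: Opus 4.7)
The plan is to attack Simon's Conjecture by an inductive vertex decomposition strategy, with the caveat that this is a long-standing open problem and any such plan is necessarily speculative. The key structural observation is that $\Delta_{n-1}^{(k)}$ is itself vertex decomposable: for any vertex $v \in [n]$ the link at $v$ equals $\Delta_{n-2}^{(k-1)}$ (viewed on $[n] \setminus \{v\}$), and the deletion at $v$ equals $\Delta_{n-2}^{(k)}$. This suggests an induction on $n+k$, with trivial base cases at $k=0$ and $k=n-1$.

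Given a shelling $F_1, F_2, \ldots, F_m$ of an arbitrary subcomplex $\Sigma \subseteq \Delta_{n-1}^{(k)}$, the first step is to fix a shedding vertex $v$ and split the facets of $\Sigma$ into two groups: those containing $v$, and those avoiding $v$. The first group, after removing $v$ from each facet, induces a partial shelling of a subcomplex of $\Delta_{n-2}^{(k-1)}$, while the second group induces a partial shelling of a subcomplex of $\Delta_{n-2}^{(k)}$. One checks that the shelling property of the original order restricts to each. By the inductive hypothesis each of these partial shellings extends to a full shelling of its respective skeleton, and the remaining task is to weave the two extended shellings back together into a single shelling of $\Delta_{n-1}^{(k)}$ whose initial segment recovers the given shelling of $\Sigma$.

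The principal obstacle lies in this merging step. When a facet $F$ with $v \in F$ is inserted at some stage of the merged order, its intersection with \emph{all} previously listed facets --- including those not containing $v$ --- must form a pure codimension-one subcomplex, and analogously for facets avoiding $v$. A natural candidate rule is to interleave the two extensions via a revlex-type total order on facets analogous to the one used later in the paper for shelling completability of vertex decomposable complexes. However, extendable shellability is strictly stronger than shelling completability, since one is handed the initial partial shelling rather than free to choose it along with the vertex order. Producing an interleaving rule that simultaneously respects the shelling constraint and the prescribed initial segment, for every shellable $\Sigma$, is where the real work concentrates, and the apparent absence of such a rule is precisely the reason the paper's vertex-decomposable arguments fall short of resolving the full conjecture.
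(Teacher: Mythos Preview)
The statement you are addressing is Simon's \emph{Conjecture}, and the paper does not prove it; it is presented as an open problem motivating the paper's weaker results on shelling completability. So there is no proof in the paper to compare your proposal against, and you yourself correctly flag that the approach is speculative and that the merging step is the unresolved obstruction.

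That said, there is an additional gap in your sketch, earlier than the merging step. You assert that ``the shelling property of the original order restricts to each'' of the two groups of facets (those containing $v$ and those avoiding $v$). This is not true in general. If $F_{i_j}$ is a facet containing $v$, the fact that $F_{i_j}$ meets the union of all earlier facets in a pure codimension-one complex may depend essentially on earlier facets \emph{not} containing $v$; once those are removed, the restricted sequence $F_{i_1}\setminus\{v\},\ldots,F_{i_s}\setminus\{v\}$ need not be a shelling of its subcomplex of $\Delta_{n-2}^{(k-1)}$. The same failure can occur for the facets avoiding $v$. What \emph{is} true (and what the paper cites as Lemma~\ref{lem:linkshell}) is that the link of any face in a shellable complex is shellable, but the shelling order one obtains is not in general the restriction of the ambient order. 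Hence even before you reach the interleaving problem, the inductive hypothesis cannot be invoked in the way you describe: you would first need to replace the restricted sequences by genuine shellings, and then you have lost any control over how they relate to the original prescribed initial segment.

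In short, your diagnosis of where the difficulty lies is essentially correct, but it sets in one step earlier than you indicate, and the paper makes no claim to resolve it.
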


The $k=2$ case of Simon's conjecture follows from \cite{BjoEri} by considering the uniform matroid of rank $3$. More recently Bigdeli, Yazdan Pour, and Zaare-Nahandi \cite{BPZDec} established the $k \geq n-3$ cases, with a simpler proof provided independently by the second author \cite{Doc} based on results of Culbertson, Guralnik, and Stiller \cite{CulGurSti}. In \cite{CDGS} these last collection of authors also showed that if $\Delta$ is a $d$-dimensional simplicial complex on at most  $d+3$ vertices, then in fact the notions of shellable and extendably shellable are equivalent. This implies the $k = n-3$ case of Simon's conjecture and also provides a generalization of Kleinschmidt's results.  This result is also best possible in the sense that there are 2-dimensional complexes on 6 vertices that are not extendably shellable (see for example work of Moriyama and Takeuchi \cite{MorTak}, and also Bj\"orner \cite{Bjo}).  

Inspired by these observations, in this paper we consider a related notion. Recall that we use $\Delta_{n-1}^{(d)}$ to denote the $d$-skeleton of the $(n-1)$-dimensional simplex.

\begin{defn}
A pure $d$-dimensional simplicial complex $\Delta$ on $n$ vertices is said to be \defi{shelling completable} if there exists a shelling $F_1, F_2, \dots, F_s$ of $\Delta$ that can be taken as the initial sequence of some shelling of $\Delta_{n-1}^{(d)}$.
\end{defn}

In particular, a shelling completable complex is shellable. Note also that if $\Delta$ is shelling completable than \emph{any} shelling of $\Delta$ can be completed to a shelling of $\Delta_{n-1}^{(d)}$.  Indeed, suppose $\Delta$ is shelling completable and $F_1, \dots, F_s$ is a shelling order of $\Delta$ that can be completed to a shelling of $\Delta_{n-1}^{(d)}$ via $F_1, \dots, F_s, F_{s+1}, \dots F_t$.  For any $j > s$ the condition that $F_j$ must satisfy to constitute a shelling move is independent of the shelling order on $\Delta$.

Next note that Simon's conjecture is equivalent to the statement that any pure shellable complex is shelling completable.  From this perspective it is of interest to find a large class of shellable complexes that are shelling completable.  Also note that if $\Delta$ is a $k$-decomposable complex that can be completed (by adding facets) to $\Delta_{n-1}^{(d)}$ while maintaining $k$-decomposability, then $\Delta$ is shelling completable.

Our first examples of shelling completable complexes come from the class of pure shifted complexes, which are known to be vertex decomposable. Recall that a simplicial complex is \defi{shifted} if there exists an ordering of its vertex set $V = \{1, 2, \dots, n\}$ such that for any face $\{v_1, v_2, \dots, v_k\}$ replacing any $v_i$ with a smaller vertex results in a $k$-set that is also a face.  Note that if $\Delta$ is a pure $d$-dimensional shifted complex according to some ordering on its vertex set then adding the reverse-lexicographically (revlex) smallest missing $(d+1)$-subset $F$ again results in a shifted complex (see Proposition \ref{prop:shift} below).  This in turn implies that pure shifted complexes are in fact shelling completable. Our first result says that in fact any pure vertex decomposable complex admits an ordering of its vertex set with this property. 

\newtheorem*{thm:vdextend}{Theorem \ref{thm:vdextend}}
\begin{thm:vdextend}
Suppose $\Delta$ is a pure $d$-dimensional vertex decomposable complex on ground set $V$. Then either $\Delta$ is full over $V$ or there exists a linear order on $V$ such that if $F$ is the revlex smallest $(d+1)$-subset of $V$ not contained in $\Delta$ then the simplicial complex generated by $\Delta \cup \{F\}$ is vertex decomposable.
\end{thm:vdextend}

As a corollary we obtain a large class of shelling completable complexes, providing a positive answer to a weakened version of Simon's conjecture.

\newtheorem*{cor:VDcomplete}{Corollary \ref{cor:VDcomplete}}
\begin{cor:VDcomplete}
Pure vertex decomposable complexes are shelling completable.
\end{cor:VDcomplete}

Corollary \ref{cor:VDcomplete} also provides a new tool in the search for a counterexample to Simon's conjecture, since in particular it implies that any `stuck' initial shelling of $\Delta^{(k)}_{n-1}$ must in fact fail to be vertex decomposable.

Recall that the class of vertex decomposable complexes include pure shifted complexes and (independence complexes of) matroids. Theorem \ref{thm:vdextend} implies that there exists an ordering of the ground set of these complexes with the property that adding the revlex smallest missing $k$-subset results in a vertex decomposable complex. In the context of shifted complexes we have seen that the natural ordering of the ground set satisfies this property. For the case of matroids we prove that such \emph{revlex decomposing orders} (see Definition \ref{defn:decomporder}) are easy to come by.

\newtheorem*{prop:matroids}{Proposition \ref{prop:matroids}}
\begin{prop:matroids}
Let $\M$ be a rank $d$ matroid on ground set $V$. Then any ordering $v_1, v_2, \dots, v_n$ of $V$ with the property that $\{v_1, v_2, \dots,v_d\} \in \M$ is a revlex decomposing order.
\end{prop:matroids}

In particular for a rank $d$ matroid $\M$ it is `easy' to find a $d$-subset $F$ of the ground set with the property that $\M \cup \{F\}$, while no longer a matroid, is still vertex decomposable.  

Simon's conjecture and shelling completions are also related to certain notions of \emph{chordality} for simplicial complexes. In \cite{BigFar} Bigdeli and Faridi study a notion of \emph{ridge chordal} simplicial complexes (based on a similar notion for clutters introduced by Bigdeli, Yazdan Pour, and Zaare-Nahandi in \cite{BPZSta}). They conjecture that if $\Delta$ is a simplicial complex whose clique complex has shellable Alexander dual, then $\Delta$ is ridge-chordal.  They show that this conjecture implies Simon's conjecture, and also show that it holds true if one replaces the `shellable' assumption with `vertex decomposable'.   Recently Benedetti and Bolognini \cite{BenBol} have provided a counterexample to the full Bigdeli-Faridi conjecture.  We discuss these connections in Section \ref{sec:ridgechordal}.

In the last part of the paper we consider shelling completable complexes with few vertices (relative to dimension).  We exploit a connection between chordal graphs and certain shellable complexes to establish the following.

\newtheorem*{thm:shellableVD}{Theorem \ref{thm:shellableVD}}
\begin{thm:shellableVD}
Suppose $\Delta$ is a pure shellable $d$-dimensional simplicial complex on $d+3$ vertices.  Then $\Delta$ is vertex decomposable (and hence shelling completable).
\end{thm:shellableVD}

This theorem, along with results from \cite{CDGS}, imply that for $d$-dimensional complexes on at most $d+3$ vertices the notions of vertex decomposable, shellable, shelling completable, and extendably shellable are all equivalent.

The rest of the paper is organized as follows. In Section \ref{sec:VD} we recall some necessary definitions, prove Theorem \ref{thm:vdextend}, and discuss some corollaries and connections to chordal complexes. In Section \ref{sec:matroids} we discuss the notion of revlex decomposing orders in the context of matroids and prove Theorem \ref{prop:matroids}. In Section \ref{sec:few} we consider $d$-dimensional complexes on at most $d+3$ vertices and prove Proposition \ref{thm:shellableVD}. We end in Section \ref{sec:further} with some discussion and open questions.

\section{Vertex decomposable complexes and shelling completions}\label{sec:VD}

In this section we provide some necessary background regarding simplicial complexes and related notions, and also prove Theorem \ref{thm:vdextend}.  

A \defi{simplicial complex} $\Delta$ on a finite ground set $V$ is a collection of subsets of $V$ that is closed under taking subsets, so that if $\sigma \in \Delta$ and $\tau \subset \sigma$ then $\tau \in \Delta$.  The elements of $\Delta$ are called \defi{faces}. Note that we do not require $\{v\} \in \Delta$ for all $v\in V$. The elements $v \in V$ such that $\{v\} \in \Delta$ will be called the \defi{vertices} of $\Delta$, whereas elements $w \in V$ that are not vertices will be called \defi{loops}. In particular the vertex set of $\Delta$ can be a proper subset of its ground set. As in \cite{Jon} we adopt the convention that the \defi{void complex} $\emptyset$ is a simplicial complex, distinct from the \defi{empty complex} $\{\emptyset\}$.

 A \defi{facet} of $\Delta$ is an element that is maximal under inclusion. The \defi{dimension} of $\Delta$ is the largest cardinality (minus 1) of any facet. A simplicial complex $\Delta$ is \defi{pure} if all facets have the same cardinality. Given a ground set $V$ and a collection ${\mathcal S} = \{S_1, S_2, \dots, S_k\}$ of subsets $S_i \subset V$, we will use $\langle S_1, S_2, \dots, S_k\rangle$ to denote the simplicial complex \defi{generated by} ${\mathcal S}$, by which we mean the smallest simplicial complex containing the collection ${\mathcal S}$.  Note that if a simplicial complex $\Delta$ has facets $F_1, \dots, F_k$ then $\Delta = \langle F_1, \dots, F_k \rangle$. With these notions we can state the definition of a shellable complex.


\begin{defn}\label{defn:shellable}
 A pure $d$-dimensional simplicial complex $\Delta$ is  \defi{shellable} if there exists an ordering of its facets $F_1, F_2, \dots, F_s$ such that for all $k = 2,3, \dots, s$ the simplicial complex
\[\left ( \bigcup_{i=1}^{k-1} \langle F_i \rangle \right ) \cap \langle F_ k \rangle\]
\noindent
is pure of dimension $d-1$.  By convention the void complex $\emptyset$ and the empty complex $\{\emptyset\}$ are both shellable.  
\end{defn}

Note that a shellable complex is connected as long as $d \geq 1$, and a pure $1$-dimensional simplicial complex (a graph) is shellable if and only if it is connected. We next recall the notion of link, star, and deletion of a face in a simplicial complex.

\begin{defn}
Suppose $\Delta$ is a simplicial complex on ground set $V$ and let $F \in \Delta$ be a nonempty face.  The  \defi{link},  \defi{star} and the \defi{deletion} of $F$ are defined as 
$$\lkc(F) := \{G \in \Delta : G \cap F = \emptyset, G \cup F \in \Delta\},$$
$$\stc(F) := \{G \in \Delta : F \subset G\},$$
$$\delc(F) := \{G \in \Delta: F \nsubseteq G\}.$$
The ground set of $\stc(F)$ is $V$, whereas the ground set of $\lkc(F)$ is $V \backslash F$.  If $|F| > 1$ then $\delc(F)$ has ground set $V$, and if $F = \{v\}$ then the ground set is $V \backslash \{v\}$.   
\end{defn}

We note that shellability is preserved by taking links, a fact that will be useful later.

\begin{lem}[\cite{ZieLec}, Lemma 8.7]
\label{lem:linkshell}
If $\Delta$ is a shellable simplicial complex and $F \in \Delta$ is any face, then the link $\lkc(F)$ is shellable.
\end{lem}

We next define the class of vertex decomposable simplicial complexes recursively as follows.

\begin{defn}
\label{def:vd}
A simplicial complex $\Delta$ is \defi{vertex decomposable} if $\Delta$ is a simplex (including $\emptyset$ and $\{\emptyset\}$), or $\Delta$ contains a vertex $v$ such that
\begin{enumerate}
    \item both $\delc(v)$ and $\lkc(v)$ are vertex decomposable, and
    \item any facet of $\delc(v)$ is a facet of $\Delta$.
\end{enumerate}
A vertex $v$ that satisfies the second condition is called a \defi{shedding vertex} of $\Delta$. We will call a vertex $v$ that satisfies both conditions a \defi{decomposing vertex}.
\end{defn}


 Vertex decomposable complexes were introduced in the pure setting by Provan and Billera \cite{ProBil} and extended to non-pure complexes by Bj\"orner and Wachs \cite{BjoWac}.  It is known that any pure vertex decomposable complex is shellable, a fact implied by the following result of Wachs \cite{Wac}.

\begin{lem}[\cite{Wac}, Lemma 6]
Suppose $\Delta$ is a simplicial complex with shedding vertex $v$.  If both $\delc(v)$ and $\lkc(v)$ are shellable then $\Delta$ is shellable.
\end{lem}

In what follows we will restrict ourselves to pure simplicial complexes.  Given two facets $F$ and $G$ of a pure $d$-dimensional simplicial complex $\Delta$ we say that $F$ and $G$ are \defi{adjacent} if they differ by one vertex, i.e. $|F \cap G| = d$.

\begin{lem}
\label{lem:shedd}
Suppose $\Delta$ is a pure simplicial complex and let $v \in V$ be a vertex.  Then $v$ is a shedding vertex if and only if any facet of $\stc(v)$ is adjacent to some facet of $\delc(v)$.
\end{lem}
\begin{proof}
For one direction suppose $v$ is a shedding vertex of $\Delta$ and let $F$ be a facet of $\stc(v)$. We have that $F \backslash \{v\}$ is a face of $\delc(v)$. Let $F'$ be a facet of $\delc(v)$ containing $F \backslash \{v\}$. Since $v$ is a shedding vertex we have that $F'$ is a facet of $\Delta$, and since $\Delta$ is pure we have that $F'$ and $F$ have the same cardinality. We conclude that $F'$ is a facet of $\delc(v)$ adjacent to $F$.  

For the other direction suppose $v$ is not a shedding vertex, so that some facet $F$ of $\delc(v)$ is not a facet of $\Delta$.  Let $F^\prime$ be a facet of $\Delta$ that contains $F$, so that $v \in F^\prime$ and hence $F^\prime$ is a facet of $\stc(v)$.  But $F^\prime$ cannot be adjacent to any facet in $\delc(v)$ since $F$ was a facet of $\delc(v)$.
\end{proof}

We next turn to the question of shelling completions for vertex decomposable complexes.  For this we will need the following concepts. Suppose $\Delta$ is a $d$-dimensional simplicial complex on ground set $V$.  We say that $\Delta$ is \defi{full (over $V$)} if it is the $d$-skeleton of the simplex over the vertex set $V$, i.e. it consists of all $(d+1)$-subsets of $V$.  Note that a $d$-simplex is full if and only if $|V| = d+1$.

We will also need the notion of reverse lexicographic (revlex) order on $k$-subsets of an ordered ground set. For this recall that if $V = \{1,2, \dots, n\}$ is a linearly ordered set, then $\{v_1 < v_2 < \cdots < v_k\}$ is \defi{revlex smaller} than $\{w_1 < w_2 < \cdots < w_k\}$ if for the largest $j$ with $v_j \neq w_j$ we have $v_j < w_j$.  Note that if one adds a new element $n+1$ to the set $V$ then any $k$-subset that contains $n+1$ will be revlex larger than any $k$-subset that does not contain $n+1$. This implies the following observation.

\begin{lem}\label{lem:revlex}
Suppose $V$ is a finite linearly ordered set with largest element $v$, and suppose ${\mathcal F}$ is a collection of $d$-subsets of $V$. Let ${\mathcal G} \subset {\mathcal F}$ denote the collection of those $d$-subsets $F \in {\mathcal F}$ such that $v \notin F$.  Then assuming ${\mathcal G} \neq \emptyset$, the revlex smallest element of ${\mathcal G}$ is also the revlex smallest element of ${\mathcal F}$. 
\end{lem}

As mentioned in Section \ref{sec:intro}, we can use revlex orders to build new shifted complexes from existing ones. More precisely we have the following.

\begin{prop}\label{prop:shift}
Suppose $\Delta$ is a pure shifted $d$-dimensional simplicial complex with respect to some linear order on its ground set $V$, and assume that $\Delta$ is not full on $V$. Let $F$ be the revlex smallest $(d+1)$-subset of $V$ satisfying $F \notin \Delta$. Then the complex $\Delta \cup \langle F \rangle$ is again a pure shifted simplicial complex.
\end{prop}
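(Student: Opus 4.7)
The plan is to verify shiftedness of the generated complex $\Delta' := \Delta \cup \langle F \rangle$ directly by case analysis on whether a given face lies in $\Delta$ or is a new subset of $F$. The key observation is the following lemma: if $v \in F$, $w < v$, and $w \notin F$, then $F^* := (F \setminus \{v\}) \cup \{w\}$ is revlex smaller than $F$, and hence lies in $\Delta$ by the defining minimality of $F$. This lemma is proved by comparing the sorted versions of $F$ and $F^*$ position by position: after the swap, the sorted sequences agree strictly above position $i$ (where $v = f_i$), and at position $i$ itself the entry of $F^*$ is strictly smaller than $f_i$, so $F^*$ is revlex smaller than $F$.

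With the lemma in hand, I would fix an arbitrary face $G \in \Delta'$, choose $v \in G$ and $w < v$ with $w \notin G$, and verify that $G' := (G \setminus \{v\}) \cup \{w\}$ lies in $\Delta'$. If $G \in \Delta$, then the shiftedness of $\Delta$ gives $G' \in \Delta \subseteq \Delta'$. Otherwise $G$ is a subset of $F$ that is not a face of $\Delta$. If $w \in F$, then $G' \subseteq F$, hence $G' \in \Delta'$. If $w \notin F$, then $v \in G \subseteq F$, so the key lemma applies to produce $F^* \in \Delta$; since $G \setminus \{v\} \subseteq F \setminus \{v\}$, we have $G' \subseteq F^*$, and therefore $G' \in \Delta \subseteq \Delta'$.

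The only substantive obstacle is the revlex comparison in the key lemma, but this is routine bookkeeping on sorted sequences in the spirit of Lemma~\ref{lem:revlex}. Once the lemma is settled, the case analysis above is immediate, and together with the fact that $\Delta'$ is pure of dimension $d$ (since $F$ has size $d+1$ and all facets of $\Delta$ retain this size), we conclude that $\Delta \cup \langle F \rangle$ is a shifted simplicial complex.
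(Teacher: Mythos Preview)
Your proposal is correct and follows essentially the same approach as the paper: the central observation in both is that replacing any $v \in F$ by a smaller $w \notin F$ yields a $(d+1)$-subset that is revlex smaller than $F$ and hence already lies in $\Delta$. The paper records only this observation and declares shiftedness immediate, whereas you additionally spell out the case analysis for proper subfaces of $F$; this extra care is harmless but not a different route.
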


\begin{proof}
One can see that replacing any element $x \in F$ with some $y \in V$ satisfying $y < x$ results in a $(d+1)$-subset $F^\prime = (F \backslash \{x\}) \cup \{y\}$ that is revlex smaller than $F$.  Hence $F^\prime$ is a facet of $\Delta \cup \langle F \rangle$, implying that $\Delta \cup \langle F \rangle$ satisfies the shifted condition for all sets of size $(d+1)$.  But since $\Delta \cup \langle F \rangle$ is pure this implies the condition for all subsets, so that $\Delta \cup \langle F \rangle$ is shifted.
\end{proof}

Our main result generalizes this observation for the class of vertex decomposable complexes.


\begin{thm}
\label{thm:vdextend}
Suppose $\Delta$ is a pure $d$-dimensional vertex decomposable simplicial complex on ground set $V$. Then either $\Delta$ is full over $V$ or there exists a linear order on $V$ such that if $F$ is the revlex smallest $(d+1)$-subset of $V$ not contained in $\Delta$ then the simplicial complex $\Delta \cup \langle F \rangle$ is vertex decomposable.
\end{thm}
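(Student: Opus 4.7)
The plan is to induct on $|V|$, building the desired linear order from the recursive vertex decomposable structure of $\Delta$. The base case $|V|=d+1$ is vacuous since any pure $d$-dimensional complex on $d+1$ vertices is the full simplex. For the inductive step, choose a decomposing vertex $v$ of $\Delta$, so that both $\delc(v)$ and $\lkc(v)$ are vertex decomposable (and pure of dimensions $d$ and $d-1$ respectively, using the shedding property and purity of $\Delta$). The key idea is to place $v$ \emph{last} in the linear order on $V$ and to order $V\setminus\{v\}$ inductively. By Lemma~\ref{lem:revlex}, with $v$ last the revlex smallest missing $(d+1)$-subset $F$ of $V$ avoids $v$ precisely when $\delc(v)$ is not full on $V\setminus\{v\}$; otherwise $F$ is forced to contain $v$ with $F\setminus\{v\}$ the revlex smallest missing $d$-subset of $V\setminus\{v\}$ from $\lkc(v)$.

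\textbf{Case 1: $\delc(v)$ is not full.} Then $\delc(v)$ is a pure $d$-dimensional vertex decomposable complex on $V\setminus\{v\}$ that is not full, so by induction there is a linear order on $V\setminus\{v\}$ for which $\delc(v)\cup\{F'\}$ is vertex decomposable, where $F'$ is its revlex smallest missing $(d+1)$-subset. Extending by placing $v$ last, Lemma~\ref{lem:revlex} gives $F=F'$. Setting $\Delta' := \Delta\cup\{F\}$, the link of $v$ in $\Delta'$ equals $\lkc(v)$ and the deletion of $v$ equals $\delc(v)\cup\{F\}$; both are vertex decomposable. The shedding property of $v$ in $\Delta'$ is preserved because every facet of $\delc(v)$ was a facet of $\Delta$ and $F$ itself is a facet of $\Delta'$.

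\textbf{Case 2: $\delc(v)$ is full on $V\setminus\{v\}$.} Since $\Delta$ is not full on $V$, every missing $(d+1)$-subset must contain $v$, forcing $\lkc(v)$ to be a non-full $(d-1)$-dimensional vertex decomposable complex on $V\setminus\{v\}$. Induction applied to $\lkc(v)$ yields an order on $V\setminus\{v\}$ and a revlex smallest missing $d$-subset $F'$ for which $\lkc(v)\cup\{F'\}$ is vertex decomposable. Placing $v$ last gives $F = F'\cup\{v\}$. In $\Delta' = \Delta\cup\{F\}$ the deletion of $v$ is the full $d$-skeleton on $V\setminus\{v\}$, which is vertex decomposable; the link of $v$ is $\lkc(v)\cup\{F'\}$, vertex decomposable by induction; and $v$ is a shedding vertex because $\Delta'$ is pure of dimension $d$, making every $(d+1)$-subset of $V\setminus\{v\}$ a facet of $\Delta'$.

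I expect the main obstacle to be coordinating the choice of revlex order on $V$ with the requirement that the \emph{same} vertex $v$ continue to serve as a decomposing vertex of $\Delta\cup\{F\}$. Placing $v$ last makes Lemma~\ref{lem:revlex} do exactly the right work: $F$ either lives entirely in $V\setminus\{v\}$ or contains $v$, and in each situation $F$ (respectively $F\setminus\{v\}$) is precisely the revlex smallest missing face of whichever subcomplex ($\delc(v)$ or $\lkc(v)$) the inductive hypothesis is applied to, allowing the new facet to be glued on without destroying $v$'s decomposing status.
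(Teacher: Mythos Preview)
Your approach is essentially the paper's: place a decomposing vertex $v$ last, and split into the two cases depending on whether $\delc(v)$ is full on $V\setminus\{v\}$. The analysis in your Cases 1 and 2 is correct and matches the paper's.

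However, there is a genuine gap. Your inductive step begins with ``choose a decomposing vertex $v$ of $\Delta$'', but a \emph{simplex} has no decomposing vertex: if $\Delta$ is the $d$-simplex on $\{v_0,\dots,v_d\}$ then for any $v_i$ the deletion $\delc(v_i)$ is a $(d-1)$-simplex, whose unique facet is not a facet of $\Delta$, so $v_i$ is not shedding. Thus whenever $\Delta$ is a $d$-simplex with $|V|>d+1$ (i.e.\ a simplex with loops), it is not full over $V$, yet your argument does not apply. This case genuinely arises in the recursion: for instance, if $\Delta$ is the full $d$-skeleton on a vertex set $W$ with $|W|=d+2$ and $W\subsetneq V$, your Case~1 passes to $\delc(w)$, which is exactly a $d$-simplex with loops. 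The paper handles this explicitly by ordering the ground set so that loops come last, taking the revlex smallest facet $F'$, and setting $F=(F'\setminus\{w\})\cup\{v\}$ for the smallest loop $v$; then $v$ is shedding in $\Delta\cup\{F\}$ because $F$ is adjacent to $F'$.

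A second, smaller issue: your stated base case ``$|V|=d+1$'' is dimension-dependent, but in Case~2 you pass to $\lkc(v)$, a $(d-1)$-dimensional complex on $n-1$ elements, so the quantity $|V|-(d+1)$ does not decrease. Either set up the induction on $|V|$ alone (with base cases at $|V|\le 1$, covering all dimensions), or use the paper's double induction on $d$ and then on $|V|$. Once you add the simplex-with-loops base case and clean up the induction scheme, your argument is complete and coincides with the paper's.
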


\begin{proof}
We use induction on $d$, then on $n = |V|$. Note that for $d=-1$ the statement is trivial. If $d=0$ then $\Delta$ is either full (it consists of all $\{v\}$ for $v\in V$) or adding any missing element in $V$ results in a vertex decomposable complex. For any $d \geq 1$, the case $n=d+1$ is true since $\Delta$ is a simplex on $V$ and hence is full.   

We now assume that $d \geq 1$ and $n \geq d+2$. We claim that we can also assume that $\Delta$ is not full over its vertex set $W$.  To see this suppose that $\Delta$ is full over $W$ and pick any ordering on the ground set  $V$ so that all elements in $W$ are smaller than all elements in $V \backslash W$.   Let $F^\prime$ be the revlex smallest facet of $\Delta$ according to this ordering, and let $w$ be the largest element of $F^\prime$. Now let $v$ be the smallest element in $V \backslash W$ and set $F = (F^\prime \backslash \{w\}) \cup \{v\}$. Then the simplicial complex $\Delta^\prime = \Delta \cup \langle F \rangle$ is vertex decomposable since $\delcp(v) = \Delta$, the complex $\lkcp(v)$ is a simplex, and $v$ is a shedding vertex since $F$ is adjacent to $F^\prime$.  We conclude that $v$ is a decomposing vertex.



Now suppose that $\Delta$ is a vertex decomposable complex that is not full on its vertex set. At this point we can continue the induction by picking a vertex $v$ which is a loop (if one exists) or a decomposing vertex (which always exists sicne $\Delta$ is vertex decomposable.  The reason for this flexibility will become explained later.

{\bf Case 1.} Suppose we choose $v$ to be a loop. In this case we have by induction on $n$ that the complex $\Delta$ on ground set $V \backslash \{v\}$ admits the desired ordering. To extend the ordering to all of $V$ we simply define $v$ to be the largest element. This provides the desired ordering of $V$ by Lemma \ref{lem:revlex}.

{\bf Case 2.} Next suppose we choose $v$ be a decomposing vertex of $\Delta$, so that both $\delc(v)$ and $\lkc(v)$ are vertex decomposable. Consider the partition of facets of $\Delta$ into $\stc(v) \dot\cup \delc(v)$.  We now have two subcases. 

{\bf Case 2a.} First assume $\delc(v)$ is not full on its vertex set $V \backslash \{v\}$. By induction on $n$, we have that $\delc(v)$ admits an ordering of its vertex set $V \backslash \{v\}$ such that the revlex smallest missing $(d+1)$-subset $F \subset V \backslash \{v\}$ can be added to obtain another vertex decomposable complex. Let $\Delta^\prime = \Delta \cup \langle F \rangle$. Note that $\delcp(v) = \langle \delc(v) \cup \{F\} \rangle$ and $\lkcp(v) = \lkc(v)$, both of which are vertex decomposable by assumption.  We have not added any facets to $\stc(v)$, so $v$ is a shedding vertex of $\Delta^\prime$ by Lemma~\ref{lem:shedd}. Hence $v$ is a decomposing vertex for $\Delta^\prime$.  Use the ordering on $V \backslash \{v\}$ provided by the induction hypothesis and extend to all of $V$ by declaring $v$ to be the largest element. This again provides the desired ordering of $V$ by Lemma \ref{lem:revlex}.

{\bf Case 2b.} We next suppose that $\delc(v)$ is full on vertex set $V \backslash \{v\}$. Note that we can assume that $\lkc(v)$ is not full on $V \backslash \{v\}$ since otherwise this would imply that $\Delta$ is full. In this case, by induction on $d$ we have that there exists an ordering on the ground set $V \backslash \{v\}$ of $\lkc(v)$ with the desired properties.  In particular the revlex smallest missing $d$-set $G \subset V \backslash \{v\}$ has the property that the simplicial complex $\langle \lkc(v) \cup \{G\} \rangle$ is vertex decomposable.  Now let $\Delta^\prime$ denote the simplicial complex generated by $\Delta \cup \{G \cup \{v\}\}$. Note that $\delcp(v) = \delc(v)$ since $\delc(v)$ is full.  Also $\lkcp(v) = \langle \lkc(v) \cup (G \backslash \{v\}) \rangle$, which we have assumed is vertex decomposable.

Finally we claim that $v$ is a shedding vertex. To see this note that there has to be some facet in $\delc(v)$ that is adjacent to $G \cup \{v\}$ since $\delc(v)$ is full. The claim then follows from Lemma~\ref{lem:shedd}.  Once again we use the ordering on $V \backslash \{v\}$ obtained by induction and extend to all of $V$ by declaring $v$ to be the largest element.  One can see that $G \cup \{v\}$ is the revlex smallest missing $(d+1)$-subset of $\Delta$ because we are assuming that $\delc(v)$ is full. The result follows.
\end{proof}

For an illustration of the various steps in the above proof, we refer to Example~\ref{ex:decompose}. To establish our desired corollary we will next need the following observation, for which we thank Michelle Wachs for a simplified proof.

\begin{lem}
\label{lem:bruno}
Suppose $\Delta$ is a shellable $d$-dimensional complex on ground set $V$ and suppose $F$ is a  $(d+1)$-subset of $V$ with the property that the complex $\Delta^\prime = \langle \Delta \cup \{F\} \rangle$ is again shellable.  Then any shelling of $\Delta$ can be extended to a shelling of $\Delta^\prime$ by adding $F$ as the last facet.
\end{lem}

\begin{proof}
Suppose $\Delta$, $F$, and $\Delta^\prime$ are as above and let $\Gamma = \langle F \rangle \cap \Delta$. First note that it is enough to prove that $\Gamma$ is pure of dimension $d-1$, since then adding $F$ to any shelling of $\Delta$ satisfies the conditions of Definition \ref{defn:shellable}.
Suppose $F_1, F_2, \dots, F_k$ is a shelling of $\Delta^\prime$ where $F = F_t$.  For a contradiction suppose that $\Gamma$ has an inclusion-wise maximal face $G$ of dimension less than $d-1$.  Then $G \subset F_t \cap F_i$ for some $i \neq t$. Let $j$ be the smallest such $i$.

{\bf Case 1.} First suppose $j <t$.  Then we have $G \in \langle F_t \rangle \cap \left( \cup_{i=1}^{t-1} \langle F_i \rangle \right)$.  Since $F_1, \dots, F_k$ is a shelling we have a $(d-1)$-dimensional $H$ such that $G \subsetneq H$ and 
\[H \in \langle F_t \rangle \cap \left( \bigcup_{i=1}^{t-1} \langle F_i \rangle \right) \subset \langle F_t \rangle \cap \left( \bigcup_{i=1}^{k} \langle F_i \rangle \right)= \Gamma.\]
\noindent
This is a contradiction to the assumption that $G$ is a facet of $\Gamma$.

{\bf Case 2.} Next suppose $j > t$. In this case $F_t$ and $F_j$ are the only facets among the $F_1, \dots, F_j$ that contain $G$. But since $\langle F_j \rangle \cap \left( \cup_{i=1}^{j-1} \langle F_i \rangle \right)$ is pure $(d-1)$-dimensional it follows that there exists a $(d-1)$-dimensional face $H$ such that 
\[G \subsetneq H = F_j \cap F_t \in \langle F_t \rangle \cap \left( \bigcup_{i=1}^{k} \langle F_i \rangle \right) = \Gamma,\]
again a contradiction to the fact that $G$ is a facet.
\end{proof}

As a corollary we get a large class of complexes that are shelling completable, and hence we obtain a weakened form of Simon's conjecture.
\begin{cor}\label{cor:VDcomplete}
Pure vertex decomposable complexes are shelling completable.
\end{cor}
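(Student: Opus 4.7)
The plan is to combine Theorem~\ref{thm:vdextend} and Lemma~\ref{lem:bruno} via a straightforward iteration. Starting with the given vertex decomposable complex $\Delta = \Delta_0$ on ground set $V$ with $|V|=n$, apply Theorem~\ref{thm:vdextend}: either $\Delta_0$ is already full over $V$ (so $\Delta_0 = \Delta_{n-1}^{(d)}$ and there is nothing to do), or some linear order on $V$ produces a $(d+1)$-subset $F_1$ such that $\Delta_1$, the complex generated by $\Delta_0 \cup \{F_1\}$, is again vertex decomposable. Reapply the theorem to $\Delta_1$ (possibly with a different linear order on $V$) to obtain $F_2$, and so on.

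Since each $\Delta_{i+1}$ has one more facet than $\Delta_i$, and $V$ admits only finitely many $(d+1)$-subsets, after $m$ steps the process must terminate with $\Delta_m$ full over $V$, i.e.\ $\Delta_m = \Delta_{n-1}^{(d)}$. This yields a chain
\[
\Delta = \Delta_0 \subset \Delta_1 \subset \cdots \subset \Delta_m = \Delta_{n-1}^{(d)},
\]
in which every $\Delta_i$ is vertex decomposable, hence shellable, and $\Delta_{i+1} = \Delta_i \cup \{F_{i+1}\}$ as generated complexes.

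Now fix any shelling $F_{1}',F_{2}',\ldots,F_{s}'$ of $\Delta_0$, which exists by shellability of $\Delta$. Because both $\Delta_i$ and $\Delta_{i+1}$ are shellable, Lemma~\ref{lem:bruno} guarantees that the shelling of $\Delta_i$ extends to a shelling of $\Delta_{i+1}$ by appending $F_{i+1}$ as the last facet. Iterating this extension $m$ times produces a shelling of $\Delta_{n-1}^{(d)}$ whose initial segment is exactly the chosen shelling of $\Delta$. This is precisely the definition of shelling completable.

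There is no real obstacle here; the substantive content sits in Theorem~\ref{thm:vdextend} and Lemma~\ref{lem:bruno}. The only point worth noting is that the linear order on $V$ produced by Theorem~\ref{thm:vdextend} may change from one $\Delta_i$ to the next, but this is harmless: the corollary needs only that \emph{some} facet can be added while preserving shellability at each step, which is exactly what Lemma~\ref{lem:bruno} converts into a shelling extension.
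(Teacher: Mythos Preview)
Your proof is correct and follows essentially the same approach as the paper: both iterate Theorem~\ref{thm:vdextend} to build a chain of vertex decomposable complexes terminating in $\Delta_{n-1}^{(d)}$, and then invoke Lemma~\ref{lem:bruno} at each step to extend a shelling. The paper phrases this as induction on the number of missing $(d+1)$-subsets, while you unroll the induction explicitly, but the argument is the same.
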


\begin{proof}
Suppose $\Delta$ is pure $d$-dimensional vertex decomposable on ground set $V$, where $|V| = n$. Let $m$ be the number of $(d+1)$-subsets of $V$ that are missing as facets in $\Delta$. If $m=0$ then $\Delta = \Delta^{(d)}_{n-1}$ is full and we are done.  Otherwise by Theorem \ref{thm:vdextend} we have some $(d+1)$-subset $F \subset V$ such that $F \notin \Delta$ with $\Delta \cup \{F\}$ vertex decomposable, and hence shellable. From Lemma \ref{lem:bruno} we know that any shelling order of $\Delta$ can be extended to a shelling of $\Delta \cup \{F\}$. The result follows by induction on $m$.
\end{proof}


\subsection{Connection to ridge chordal complexes} \label{sec:ridgechordal}

As mentioned in Section \ref{sec:intro}, our results also connect to notions of higher dimensional \emph{chordality} for simplicial complexes. In this section we explain this relation, after reviewing the relevant definitions. 

For a pure $d$-dimensional simplicial complex $\Delta$ on vertex set $V$, a \emph{clique} is a subset $K \subset V$ such that either $|K| < d+1$ or else all $(d+1)$-subsets of $K$ appear among the facets of $\Delta$.  A \emph{ridge} $R \subset V$ of $\Delta$ is a face of $\Delta$ such that $|R| = d$.  The \emph{closed neighborhood} of a ridge $R$ is the set
\[N_\Delta[R] = R \cup \{v \in V: \text{$R \cup \{v\}$ is a facet of $\Delta$}\}.\] 
We say that a ridge $R$ is \emph{simplicial} if $N_\Delta[R]$ is a clique.

In \cite{BPZSta} Bigdeli, Yazdan Pour, and Zaare-Nahandi use these concepts to define a notion of chordal compelxes (which we will refer to as \emph{ridge chordal} to distinguish it from other notions). In what follows we use $\Delta \backslash R$ to denote the simplicial complex consisting of all faces $\sigma \in \Delta$ that do not contain $R$.

\begin{defn}
A pure simplicial complex $\Delta$ is \emph{ridge chordal} if $\Delta = \emptyset$ or $\Delta$ admits a simplicial ridge $R$ such that $\Delta \backslash R$ is ridge chordal.
\end{defn}

In \cite{BPZSta} the authors use the language of \emph{clutters} but our definition is equivalent for the case of pure simplicial complexes.  Also note that a pure $1$-dimensional simplicial complex is ridge chordal if and only if it is a chordal graph. In \cite{BigFar}, Bigdeli and Faridi extend this notion to the setting of arbitrary simplicial complexes.

For a simplicial complex $\Delta$ we let $\Cl(\Delta)$ denote the \emph{clique complex} of $\Delta$, by definition the simplicial complex whose faces are the cliques of $\Delta$. Note that $\Cl(\Delta)$ is a simplicial complex of dimension at least $d$, with the same $d$-faces as $\Delta$ and a complete $k$-skeleton for all $k < d$. The following conjecture has appeared in \cite{Doc} and \cite{Nik} (and in a stronger form in \cite{BigFar}).  In what follows we use $\Gamma^*$ to denote the \emph{Alexander dual} of a simplicial complex $\Gamma$.

\begin{conj} \label{conj:chordal}
If $\Delta$ is a pure simplicial complex such $(\Cl(\Delta))^*$ is shellable, then $\Delta$ is ridge-chordal.
\end{conj}

In \cite{Nik} Nikseresht shows that if $(\Cl(\Delta))^*$ is vertex-decomposable, then $\Delta$ is ridge-chordal. From results of \cite{BPZSta} this implies that if $\Delta$ is vertex-decomposable and not full, then there exists a $(d+1)$-subset $F$ such that $F \notin \Delta$ and such that $\Delta \cup \{F\}$ is shellable.  In other words the shelling can be continued for one more step. Our Corollary \ref{cor:VDcomplete} says that in fact the shelling can be completed to the full $d$-skeleton.

Recently in \cite{BenBol} Benedetti and Bolognini provided a counterexample to Conjecture \ref{conj:chordal}, in the form of a complex $\Delta$ is that is not ridge-chordal but such that $(\Cl(\Delta))^*$ is shellable (in fact they provide an infinite family of such complexes). We suspect that the complexes in this family are in fact shelling completable although it does not seem easy to check.  It is also an open question whether one can construct a non ridge-chordal complex $\Delta$ such that $(\Cl(\Delta))^*$ is $1$-decomposable (see Section \ref{sec:further}).

In the context of ridge-chordality, we see that the condition of being vertex decomposable is strong enough to imply results that are not true under the weaker shellability assumption.  It is still an open question whether this dichotomy exists in the context of shelling completability.  See Section \ref{sec:further} for further discussion.

\section{Revlex decomposing orders and matroid complexes}\label{sec:matroids}

Recall that a simplicial complex $\M$ is a \defi{matroid} if it is pure and its set of facets satisfy the following exchange property: If $F$ and $G$ are facets of $\M$ then for any $x \in F \backslash G$ there exists  some $y \in G \backslash F$ such that $(F \backslash \{x\}) \cup \{y\}$ is a facet of $\M$. The facets of $\M$ are usually called \defi{bases} in this theory. Also note that in some contexts this simplicial complex is called the \emph{independence complex} of $\M$ but we will sometimes simply refer to it as the matroid itself. It is well known that matroids are vertex decomposable \cite{ProBil} and hence Corollary \ref{cor:VDcomplete} implies the following.

\begin{cor}
Independence complexes of matroids are shelling completable.
\end{cor}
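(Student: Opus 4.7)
The plan is essentially a one-line deduction, combining two facts already in hand. First I would invoke the classical result of Provan and Billera \cite{ProBil} cited in the paragraph just above the corollary: the independence complex of any matroid $\M$ is (pure and) vertex decomposable. This is the only external input required, and the paper has already flagged it for us.

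Having reduced to the vertex-decomposable setting, I would then apply Corollary \ref{cor:VDcomplete}, which asserts that every vertex decomposable complex is shelling completable. Chaining these two implications gives the result immediately, with no additional argument.

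There is really no main obstacle here: both ingredients are stated and referenced in the preceding text, and the corollary is included precisely to highlight the matroid case as a clean application of the general theorem. If one wanted to make the proof slightly more self-contained, one could remark that Theorem \ref{thm:vdextend} applied to a matroid $\M$ of rank $d+1$ produces, in any decomposing order, a revlex-smallest missing $(d+1)$-subset $F$ such that $\M \cup \{F\}$ is vertex decomposable (though no longer a matroid), and then iterate via Lemma \ref{lem:bruno} to extend any given shelling of $\M$ all the way to a shelling of $\Delta_{n-1}^{(d)}$. But since Corollary \ref{cor:VDcomplete} already packages this iteration, the cleanest proof is simply the two-step citation above.
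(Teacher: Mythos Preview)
Your proposal is correct and matches the paper's approach exactly: the paper simply observes that matroids are vertex decomposable by \cite{ProBil} and then invokes Corollary~\ref{cor:VDcomplete}, with no further argument given. Your optional unpacking via Theorem~\ref{thm:vdextend} and Lemma~\ref{lem:bruno} is accurate but, as you note, unnecessary.
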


Given that any shelling of a rank $d$ matroid can be completed to a shelling of the full skeleton $\Delta_n^{(d-1)}$, a natural question to ask is whether one can control which facet can be added in the next step of the completion. In the context of matroids, one expects some flexibility since matroids themselves admit many shelling orders.   In particular recall that if $V = \{v_1, v_2, \dots, v_n\}$ is any ordering of the ground set of a rank $d$ matroid $\M$, then both lexicographic (\cite{Bjo} Theorem 7.3.4) and reverse lexicographic (\cite{GuoSheWu} Proposition 6.3) orderings of the facets (bases) of $\M$ give rise to a shelling of the complex $\M$.  For our purposes we will need the following notion.


\begin{defn}\label{defn:decomporder}
Suppose $\Delta$ is a a pure $d$-dimensional vertex decomposable simplicial complex.  An ordering $v_1, v_2, \dots, v_n$ of its ground set is a \defi{revlex decomposing order} for $\Delta$ if the complex generated by $\Delta \cup \{F\}$ is again vertex decomposable, where $F$ is the revlex smallest $(d+1)$-subset of $V$ that is missing from $\Delta$.
\end{defn}

Note that Theorem \ref{thm:vdextend} says that any vertex decomposable complex admits a revlex decomposing order. From the inductive proof of that theorem we get the following recursive method for detecting whether a given ordering of the ground set is a revlex decomposing order.

\begin{cor}
\label{cor:DO}
Suppose $\Delta$ is a pure $d$-dimensional simplicial complex. A linear ordering $v_1,\ldots,v_n$ of its ground set is a revlex decomposing order for $\Delta$ if one of the following is true:
\begin{itemize}
    \item $v_n$ is a loop and $v_1,\ldots,v_{n-1}$ is a revlex decomposing order for $\Delta$ (on ground set $v_1, \ldots, v_{n-1}$),
    \item $v_n$ is a decomposing vertex for $\Delta$, $\del_{\Delta}(v_n)$ is not full, and $v_1,\ldots,v_{n-1}$ is a revlex decomposing order for $\del_{\Delta}(v_n)$,
    \item $v_n$ is a decomposing vertex for $\Delta$, $\del_{\Delta}(v_n)$ is full, and $v_1,\ldots,v_{n-1}$ is a revlex decomposing order for $\lk_{\Delta}(v_n)$,
    \item $\Delta$ is full over its vertex set and any loop is bigger than any non-loop in this ordering.
    \end{itemize}
\end{cor}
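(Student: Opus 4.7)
The plan is to observe that this corollary essentially records the branching logic of the induction in the proof of Theorem \ref{thm:vdextend}, and to verify each bullet by (i) identifying, via Lemma \ref{lem:revlex}, the revlex smallest missing $(d+1)$-subset $F$ that the given ordering singles out, and (ii) confirming that $\Delta \cup \{F\}$ is vertex decomposable by the same mechanism as in that theorem.

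For the first two bullets, the largest element $v_n$ does not appear in $F$: in bullet (i) because $v_n$ is a loop of $\Delta$, and in bullet (ii) because $F$ is drawn from the missing $(d+1)$-subsets of $\del_{\Delta}(v_n)$, which lives on $V \setminus \{v_n\}$. Lemma \ref{lem:revlex} then guarantees that $F$ is simultaneously the revlex smallest missing $(d+1)$-subset on $V$ and on $V \setminus \{v_n\}$. In the loop case, declaring $v_n$ a loop of $\Delta \cup \{F\}$ as well preserves vertex decomposability, since any decomposing vertex that worked on the smaller ground set still works. In the second case, writing $\Delta' = \Delta \cup \{F\}$, I would compute $\delcp(v_n) = \del_{\Delta}(v_n) \cup \{F\}$ (vertex decomposable by hypothesis) and $\lkcp(v_n) = \lk_{\Delta}(v_n)$ (unchanged and vertex decomposable), and apply Lemma \ref{lem:shedd} to conclude that $v_n$ remains a shedding vertex of $\Delta'$, since no new facets lie in its star.

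For the third bullet, fullness of $\del_{\Delta}(v_n)$ forces every missing $(d+1)$-subset of $V$ to contain $v_n$. Because the revlex order on $(d+1)$-subsets that share $v_n$ as their largest element reduces to the revlex order on their $d$-element complements in $V \setminus \{v_n\}$, the selected $F$ must take the form $G \cup \{v_n\}$, where $G$ is the revlex smallest missing $d$-subset of $\lk_{\Delta}(v_n)$---precisely the face provided by the decomposing order on the link. Setting $\Delta' = \Delta \cup \{F\}$, I would verify that $\delcp(v_n) = \del_{\Delta}(v_n)$ remains the full $d$-skeleton (hence vertex decomposable), that $\lkcp(v_n) = \lk_{\Delta}(v_n) \cup \{G\}$ is vertex decomposable by hypothesis, and that $v_n$ is still a shedding vertex: the old facets of the star remain adjacent to facets of the (possibly enlarged) deletion, and $F$ itself is adjacent to $G \cup \{w\}$ for any $w \in V \setminus (G \cup \{v_n\})$, which exists and lies in $\delcp(v_n)$ by fullness.

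The fourth bullet reproduces the base construction in the proof of Theorem \ref{thm:vdextend}: with $v$ the smallest loop, $F'$ the revlex smallest facet of $\Delta$ on its vertex set $W$, and $w$ the largest element of $F'$, one sets $F = (F' \setminus \{w\}) \cup \{v\}$. This $F$ is the revlex smallest missing $(d+1)$-subset of $V$, since any revlex-smaller candidate would have maximum element strictly less than $v$, hence lie entirely in $W$ (loops exceed non-loops), and therefore belong to $\Delta$ by fullness. Then $v$ is a decomposing vertex of $\Delta' = \Delta \cup \{F\}$ because $\delcp(v) = \Delta$, $\lkcp(v)$ is a simplex, and $F$ is adjacent to $F' \in \delcp(v)$. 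The main obstacle throughout is bookkeeping---keeping track of which ground set is in play at each layer of the recursion and invoking Lemma \ref{lem:revlex} carefully, so that the subset chosen by the smaller ordering is indeed the subset chosen by the extended ordering on $V$.
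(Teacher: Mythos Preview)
Your proposal is correct and follows exactly the approach the paper intends: the paper offers no standalone proof of Corollary~\ref{cor:DO}, stating only that it is read off from the inductive proof of Theorem~\ref{thm:vdextend}, and your four-case verification is precisely that unpacking (loop case, decomposing vertex with non-full deletion, decomposing vertex with full deletion, and the base case where $\Delta$ is full on its vertex set). Your use of Lemma~\ref{lem:revlex} to pin down the revlex smallest missing subset at each stage, and of Lemma~\ref{lem:shedd} to confirm that $v_n$ remains a shedding vertex of $\Delta'$, mirrors the paper's own argument in Theorem~\ref{thm:vdextend}.
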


Also note that in the proof of Theorem \ref{thm:vdextend} at each step in the induction we must choose a vertex $v$ where we employ the inductive hypothesis on either the deletion $\delc(v)$ (in the first case) or the link $\lkc(v)$ (in the latter). In this way we can obtain a sequence of subcomplexes
\[\Delta = \Delta_n, \Delta_{n-1}, \dots, \Delta_s,\]
\noindent
where $n$ is the size of the ground set of $\Delta$, and $\Delta_s$ is a simplex over some (possibly smaller) ground set. We illustrate this process below with a worked example.  


\begin{example}
\label{ex:decompose}
Suppose $\Delta$ is the $3$-dimensional simplicial complex on ground set $\{1,2,\dots, 7\}$ with facets
\[\{1234, 1235,1245,1345,2345,1236,1246,1256,2356,1237, 2347\}.\]
Here we abuse notation and for example let $1245$ denote the $4$-subset $\{1,2,4,5\}$. We use the natural ordering on the ground set and verify that it is a revlex decomposing order.  In what follows, we will describe the various simplicial complexes in terms of their generating sets of facets.

First we define $\Delta_7 = \Delta$ and note that 
\[\text{del}_{\Delta_7}(7) = \langle 1234, 1235,1245,1345,2345,1236,1246,1256,2356\rangle\]
\noindent
is not full (e.g. $1346$ is missing).  Hence at this step we consider the deletion of vertex $7$ and define $\Delta_6 = \del_{\Delta_7}(7)$.

Next we note that $\text{del}_{\Delta_6}(6)$ is full so we now consider the link of $6$ in $\Delta_6$ and define
\[\Delta_5 = \lk_{\Delta_6}(6) = \langle 123,124,125,235\rangle.\]

Continuing in this fashion we have that $\text{del}_{\Delta_5}(5)$ is not full so we define
\[\Delta_4 = \del_{\Delta_5}(5) = \langle 123,124\rangle.\]

Next we see that $\text{del}_{\Delta_4}(4) = \langle 123\rangle$ is full and so we consider the link of $4$ in $\Delta_4$ and have
\[\Delta_3 = \lk_{\Delta_4}(4) = \langle 12 \rangle.\]

At this point we see that $\Delta_3$ is a simplex (on ground set $\{1,2,3\}$) and hence we have reached a base case.

Reversing this process, we see that at each step the addition of a new facet $F$ leads to a vertex decomposable complex. We begin with $\Delta_3$ since it is full over its vertex set, and hence a base case of Theorem \ref{thm:vdextend}. We extend $\Delta_3$ to $\Delta_3'$ by noting that $3$ is the smallest loop, and $2$ is the largest vertex.  Hence we add the facet $(12 \backslash \{2 \}) \cup \{3\} = 13$. Now in $\Delta_4$ we replace $\lk_{\Delta_4}(4) = \Delta_3$ with $\Delta_3'$, which results in adding the facet $134$ to obtain $\Delta_4'$. In $\Delta_5$ we replace $\del_{\Delta_5}(5) = \Delta_4$ with $\Delta_4'$ which results in adding the facet $134$ to obtain $\Delta_5'$. Next in $\Delta_6$ we replace $\lk_{\Delta_6}(6) = \Delta_5$ with $\Delta_5'$, adding facet $1346$ to obtain $\Delta_6'$.  Finally in $\Delta_7$ we replace $\del_{\Delta_7}(7) = \Delta_6$ with $\Delta_6'$, adding the facet $1346$. We note that the simplicial complex $\langle \Delta \cup \{1346\} \rangle$ is vertex decomposable, and $1346$ is indeed the smallest element missing from $\Delta = \Delta_7$ among the revlex ordered $4$-subsets of $\{1,\ldots,7\}$.

\end{example}


We will use the above observations to show that many orderings of the ground set of a matroid give rise to revlex decomposing orders.  For this we will need the following result.




\begin{lem}\label{lem:matroid}
Let $\M$ be a rank $d$ matroid on ground set $V$ and suppose that $\M$ is full on $V \backslash \{v\}$ for some nonloop $v \in V$. Suppose $F$ is a $d$-subset of $V$ with $v \in F$ such that $F$ is not a facet (basis) of $\M$. Then the complex generated by $\M \cup \{F\}$ is again a matroid.
\end{lem}

\begin{proof}
We first observe that for any $d$-subset $A$ of $V \backslash \{v\}$ there exists a $(d-1)$-subset $B \subset A$ with the property that $B \cup \{v\}$ is a facet of $\M$.  This follows from applying the exchange property with $A$ and any facet $F$ that contains $v$ (which must exist since $v$ is not a loop).

Now let $F = \{x_1, x_2, \dots, x_{d-1},v\}$ be a $d$-subset that is missing from $\M$. Let $G$ be any facet of $\M$.  We will verify the exchange properties between $F$ and $G$. We first claim that for any $f \in F$ there exists some $g_i$ such that $(G \backslash \{g_i\}) \cup \{f\}$ is a facet of $\M$. If $f \neq v$ then this is clear since $\M$ is full on $V \backslash \{v\}$. If $f = v$ we use the above observation with $A = G$ to obtain $B = G \backslash \{g_i\}$.

Next we claim that for any $g \in G$ with $g \neq v$ there exists some $x_i \in F$ such that $(F \backslash \{x_i\}) \cup \{g\}$ is a facet of $\M$. For this we again use the above observation with $A = \{g, x_1,...,x_{d-1}\}$. Note that the resulting $B$ must contain $g$ since otherwise $F$ would be a facet of $\M$. We take $x_i = A \backslash B$ and the result follows.
\end{proof}

\begin{prop}
\label{prop:matroids}
Let $\M$ be a rank $d$ matroid and suppose $v_1, v_2, \dots, v_n$ is any linear ordering of its ground set $V$ with the property that $\{v_1, v_2, \dots,v_d\} \in \M$. Then $v_1, v_2, \dots, v_n$ is a revlex decomposing order for $\M$.
\end{prop}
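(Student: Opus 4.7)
The plan is to proceed by induction on $n = |V|$, using the recursive criterion in Corollary~\ref{cor:DO} together with the matroid-preservation statement of Lemma~\ref{lem:matroid}. The base case $n = d$ is immediate: here $\M$ is the simplex on $V$, already full, so the statement is vacuous. For the inductive step, the hypothesis $\{v_1, \dots, v_d\} \in \M$ forces $v_n \notin \{v_1, \dots, v_d\}$, so this set is a basis of $\M$ avoiding $v_n$; in particular $v_n$ is never a coloop, a fact I will use throughout.

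First I would handle the case when $v_n$ is a loop of $\M$. Here $\M$ and the restriction $\M' = \M|_{V \setminus \{v_n\}}$ coincide as simplicial complexes, so vertex decomposability transfers freely between them. If $\M'$ is not full, then Lemma~\ref{lem:revlex} places the revlex smallest missing $d$-subset $F$ entirely in $V \setminus \{v_n\}$, and the induction hypothesis on $\M'$ shows $\M' \cup \{F\}$, and therefore $\M \cup \{F\}$, is vertex decomposable. If instead $\M'$ is full, then $F = \{v_1, \dots, v_{d-1}, v_n\}$ is the revlex smallest such subset, and I would verify directly via Lemma~\ref{lem:shedd} that $v_n$ is a decomposing vertex of $\M \cup \{F\}$: its deletion is $U_{d,n-1}$, its link is the $(d-1)$-simplex on $\{v_1, \dots, v_{d-1}\}$, and the unique star-facet $F$ is adjacent to the deletion-facet $\{v_1, \dots, v_d\}$.

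Next I would handle the case when $v_n$ is a non-loop. Since $v_n$ is also not a coloop, the exchange property exhibits, for each basis of $\M$ containing $v_n$, an adjacent basis of $\M$ avoiding $v_n$, which via Lemma~\ref{lem:shedd} shows $v_n$ is a shedding vertex of $\M$; its deletion and link are matroids and hence vertex decomposable. If $\del_\M(v_n)$ is not full, then Lemma~\ref{lem:revlex} again locates $F$ inside $V \setminus \{v_n\}$, and the induction hypothesis applied to the rank $d$ matroid $\del_\M(v_n)$ (still having $\{v_1, \dots, v_d\}$ as a basis) shows $\del_\M(v_n) \cup \{F\}$ is vertex decomposable; I would then check that $v_n$ remains a decomposing vertex of $\M \cup \{F\}$ (the link is unchanged, and the shedding condition is inherited from $\M$). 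If $\del_\M(v_n)$ is full, then every missing $d$-subset contains $v_n$, and Lemma~\ref{lem:matroid} immediately gives that $\M \cup \{F\}$ is itself a matroid, hence vertex decomposable.

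The main subtlety lies in this last subcase. A tempting but flawed approach would mirror Corollary~\ref{cor:DO} and induct on $\lk_\M(v_n)$; however, the hypothesis of the proposition can fail in the link, since if some $v_p \in \{v_1, \dots, v_{d-1}\}$ happens to be parallel to $v_n$ in $\M$ then $v_p$ becomes a loop in $\lk_\M(v_n)$, and $\{v_1, \dots, v_{d-1}\}$ is no longer a basis there. Sidestepping the recursion by invoking Lemma~\ref{lem:matroid} directly is what closes this gap, and it is this maneuver that makes the proposition genuinely matroid-theoretic rather than a formal corollary of Theorem~\ref{thm:vdextend}.
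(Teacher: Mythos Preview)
Your argument is correct. Both your proof and the paper's hinge on the same two ingredients---the recursive criterion of Corollary~\ref{cor:DO} and the matroid-preservation Lemma~\ref{lem:matroid}---but the recursive structure differs. The paper first locates the revlex smallest missing $d$-subset $F$, sets $q$ to be the least index with $v_q\in F$ and $v_{q-1}\notin F$, and then in one stroke passes to the minor
\[
\Gamma=\lk_{\M}\bigl(F\cap\{v_{q+1},\dots,v_n\}\bigr)\big|_{\{v_1,\dots,v_q\}},
\]
taking links at the elements of $F$ above $v_q$ and deletions elsewhere; it then observes that $\del_{\Gamma}(v_q)$ is full and invokes Lemma~\ref{lem:matroid} once. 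Your induction peels off only the top element $v_n$ at each stage and \emph{never} passes to a link: the moment $\del_{\M}(v_n)$ becomes full you appeal to Lemma~\ref{lem:matroid} directly. In effect you stop the descent at the largest element of $F$, whereas the paper continues down to $v_q$. Your route is slightly longer to write out (separate loop case, explicit verification that $v_n$ stays a shedding vertex after adding $F$) but conceptually cleaner, since it sidesteps the link entirely; your closing remark about the basis hypothesis failing in $\lk_{\M}(v_n)$ when some $v_p$ is parallel to $v_n$ is exactly the obstruction that makes a naive recursion on the link awkward, and it is a point the paper's compressed argument leaves implicit.
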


\begin{proof}
Suppose that $v_1, v_2, \dots, v_n$ is such an ordering of $V$ and let $F$ be the revlex smallest $d$-subset of $V$ that is missing from $\M$. Let $q \geq 2$ be the smallest index such that $v_q \in F$ and $v_{q-1} \not \in F$. 

Since $\M$ is a matroid we have that any element of $V$ is a loop or a decomposing vertex.   Repeatedly using Corollary~\ref{cor:DO}, we get that $v_1, v_2, \dots,v_n$ is a revlex decomposing order for $\M$ if $v_1,v_2, \dots,v_q$ is a revlex decomposing order for
\[\Gamma := \lk_{\Delta}(F \cap \{v_{q+1},\ldots,v_n\})|_{\{v_1,\ldots,v_q\}}.\]

We see that $\Gamma$ is a matroid (since it is obtained via a sequence of deletions and links of a matroid), with the property that $\del_{\Gamma}(v_q)$ is full.  This follows from the fact that $F$ was the revlex smallest $d$-subset missing from $\M$.  From Lemma \ref{lem:matroid} we see that any ordering of the ground set of $\Gamma$ is a revlex decomposing order. The result follows.
\end{proof}

As a consequence of Proposition \ref{prop:matroids} we get the following.

\begin{cor}\label{cor:matroidorder}
Let $\M$ be a rank $d$ matroid and suppose $v_1, v_2, \dots, v_n$ is any linear ordering of its ground set $V$ with the property that $\{v_1, v_2, \dots,v_d\} \in \M$, and let $F$ be the revlex smallest $d$-subset missing from $\M$.  Then the complex generated by $\M \cup \{F\}$ is vertex decomposable.
\end{cor}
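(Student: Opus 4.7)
The plan is to deduce this corollary directly from Proposition \ref{prop:matroids} together with Definition \ref{defn:decomporder}; no additional machinery is needed. The one preliminary observation I would make is that a rank $d$ matroid $\M$ is a pure $(d-1)$-dimensional vertex decomposable simplicial complex (by Provan--Billera, as recalled at the start of this section), so that Definition \ref{defn:decomporder} genuinely applies to $\M$, with the roles of ``$d$-dimensional'' and ``$(d+1)$-subset'' in the definition specializing to ``$(d-1)$-dimensional'' and ``$d$-subset'' in our matroid setting.

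Given an ordering $v_1, v_2, \dots, v_n$ of $V$ satisfying $\{v_1, \dots, v_d\} \in \M$, I would simply apply Proposition \ref{prop:matroids} to conclude that this ordering is a decomposing order for $\M$. Unpacking Definition \ref{defn:decomporder} then says precisely that the complex generated by $\M \cup \{F\}$ is vertex decomposable, where $F$ is the revlex smallest $d$-subset of $V$ missing from $\M$. This is the stated conclusion, so there is no real obstacle: the corollary is a direct restatement of Proposition \ref{prop:matroids} through the lens of the decomposing order definition.
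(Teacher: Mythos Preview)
Your proposal is correct and matches the paper's approach: the corollary is stated immediately after Proposition~\ref{prop:matroids} with the remark ``As a consequence of Proposition~\ref{prop:matroids} we get the following,'' and no further proof is given. Your unpacking of Definition~\ref{defn:decomporder} (with the dimension shift from $d$ to $d-1$) is exactly the intended one-line deduction.
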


\begin{rem}
Given a rank $d$ matroid $\M$ on ground set $V$, a related question to ask is whether there exists a $d$-subset $F \subset V$ such that $\M \cup \{F\}$ is again a \emph{matroid}. Let ${\mathcal B}(M)$ denote the collection of facets (bases) of a matroid. If $F$ happens to be a \emph{circuit-hyperplane} (that is $F$ is a circuit of $\M$ and $V \backslash F$ is a circuit of the dual matroid $\M^*$), then Kahn \cite{Kah} has shown that ${\mathcal B}(\M) \cup \{F\}$ forms the set of bases of a new matroid $\M^\prime$. This process is known as a \emph{relaxation} of $\M$. Furthermore, Truemper \cite{Tru} has shown that if $M_1$ and $M_2$ are connected matroids on the same ground set and the symmetric difference ${\mathcal B}_1 \triangle {\mathcal B}_2$ has cardinality one, then in fact one of ${\mathcal B}_1$ and ${\mathcal M}_2$ is obtained from the other by relaxing a circuit-hyperplane.

As for a nonexample we thank Jay Schweig for the following explicit construction: Consider the rank $3$ matroid $\M$ with ground set $\{x,a,b,c,d\}$ generated by the facets (bases) $\{xab, xac, xad, xbc, xbd, xcd\}$ (so that $M$ is the cone over the rank 2 uniform matroid on 4 elements). Note that if $F$ is any missing $3$-subset of $M$ then $x \notin F$. Now if $F$ had the exchange property with $xab$ it could not with $xcd$, since we cannot replace $x$ with anything to obtain a basis of $M \cup \{F\}$.
\end{rem}

\begin{rem}
A \defi{shedding order} for a vertex decomposable complex $\Sigma$ is an ordering $v_1, v_2, \dots, v_n$ of its vertices with the property that $v_{i}$ is a decomposing vertex of $\Sigma_i$, where we define $\Sigma_n = \Sigma$ and $\Sigma_{i-1} = \del_{\Sigma_{i}}(v_i)$ (so that $\Sigma_{i-1}$ is necessarily vertex decomposable). In \cite[Proposition 3.2.3]{ProBil}  it is shown that any ordering of the vertices of a matroid $\M$ is a shedding order for $\M$. 


We note that there exist shedding orders that are not revlex decomposing orders. For instance consider the simplicial complex $\Gamma$ with facets
\[\{1234,1235,1245,1345,2345,1346,1456,2456,3456\},\]
which is the complex obtained from $\Delta_6$ of Example~\ref{ex:decompose} by reversing $\{1,2,3,4,5\}$.  Since $\del_\Sigma(6)$ is full, we have that $1,2,\dots,6$ is a shedding order. But in $\lk_\Sigma(6) = \{134,145,245,345\}$ we cannot use $5$ as a decomposing vertex, so that $1,2,\dots,6$ is not a revlex decomposing order. On the other hand $5,4,3,2,1,6$ gives a revlex decomposing order, as discussed above.
\end{rem}

In light of Corollary \ref{cor:matroidorder} a natural question to ask is whether there exists an ordering of the ground set such that \emph{all} missing facets can be added in reverse lexicographic order.  The next example shows that an arbitrary ordering will not work.

\begin{example}
Let $\M$ be the matroid on ground set $[6]$ generated by the facets $\{1234, 1345,2346, 3456\}$.  We note that adding the revlex smallest missing $4$-subset $1235$ results in a shelling move but the shelling fails when we continue to add the next revlex smallest subsets
\[1235, 1245, 1236, 1246, 1256.\]
To see this let $\Delta$ denote the complex obtained by adding these $4$-subsets and consider $F = 56$, a face of $\Delta$.  We note that $\lkc(F) = \{34, 12\}$, which is $1$-dimensional and disconnected and hence not shellable. By Lemma \ref{lem:linkshell} we conclude that $\Delta$ is not shellable.
 \end{example}


\section{Complexes with few vertices}\label{sec:few}

In \cite{CDGS} it is shown that a $d$-dimensional complex $\Delta$ on $d+3$ vertices is extendably shellable if and only if $\Delta$ is shellable. In this section we show that these conditions are also equivalent to $\Delta$ being vertex decomposable.  In what follows we will assume that our complexes have no loops, so that the vertex set of $\Delta$ coincides with its ground set.

For our result we will exploit a connection between shellable complexes and the notion of a chordal graph.  Recall that a simple graph $G$ is \defi{chordal} if it has no induced cycles of length 4 or more (so that all cycles of length 4 or more have a `chord').   It is well known that any chordal graph admits a \defi{simplicial vertex}, a vertex $v \in V(G)$ such that its neighborhood (the subgraph induced on the set of vertices adjacent to $v$) is a complete graph.  From \cite{Doc} we have the following result, adapted for our purposes.

\begin{lemma} \label{lem:exposed}
Let $K_n$ denote the complete graph on vertex set $[n] = \{1, 2, \dots, n\}$.  Suppose $\{e_1, e_2, \dots, e_k\} \subset E(K_n)$ is a collection of edges and for each $j = 1,2, \dots, k$ let $F_j = [n] \backslash e_j$ denote the complementary $(n-2)$-subset. Then $K_n \backslash \{e_1, e_2, \dots, e_k\}$ is a chordal graph if and only if the simplicial complex generated by $F_1, F_2, \dots, F_k$ is shellable.
\end{lemma}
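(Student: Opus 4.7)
The plan is to handle the two directions of the biconditional separately, using induction on $n$ for the forward direction and a link argument combined with Alexander duality for the converse.

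For the $(\Rightarrow)$ direction I would induct on $n$, fixing a perfect elimination ordering $v_1, \ldots, v_n$ of $G$ (guaranteed by chordality) in which each $v_i$ is simplicial in $G[\{v_1, \ldots, v_i\}]$, and setting $v = v_n$. The removed edges partition into ``internal'' edges lying in $[n-1]$ and ``incident'' edges of the form $\{v, u_t\}$ for $u_t$ a non-neighbor of $v$, and correspondingly the facets of $\Delta$ split into cone facets $([n-1] \setminus e_j) \cup \{v\}$ (one for each internal $e_j$) and base facets $[n-1] \setminus \{u_t\}$. I would order the facets by first listing the base facets in any order and then listing the cone facets in the order given by an inductive shelling of the complex $\Delta'$ associated to $G - v$. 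Any two base facets intersect in an $(n-3)$-subset, so the base phase imposes no nontrivial shelling condition. For a cone facet $F = G \cup \{v\}$ added in the second phase, one checks that $F \cap \Sigma_{\mathrm{base}} = 2^G$: indeed any facet $G$ of $\Delta'$ must fail to contain all of $N^{-}(v)$, since otherwise the corresponding removed edge would lie inside the clique $N(v)$ forced by simpliciality of $v$. Intersections with earlier cone facets take the form $(G \cap G') \cup \{v\}$, and purity of the full intersection in dimension $n-4$ follows from the shelling condition for $\Delta'$ together with the observation that $2^G$ provides the codimension-$1$ face $F \setminus \{v\}$ of $\partial F$.

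For the $(\Leftarrow)$ direction I would prove the contrapositive. Suppose $G$ contains an induced cycle $C_k$ on a vertex set $W \subseteq [n]$ with $k \geq 4$, and set $\tau = [n] \setminus W$. Any chord of the induced $C_k$ lies in $W$ and is a removed edge, so $\tau$ is a face of $\Delta$; a direct computation identifies $\mathrm{lk}_\Delta(\tau)$ on vertex set $W$ with the complex $\Delta_{C_k}$ generated by the complements (in $W$) of the chords of $C_k$. By Lemma~\ref{lem:linkshell} it therefore suffices to show $\Delta_{C_k}$ is not shellable. For $k = 4$ this is immediate since $\Delta_{C_4}$ is a pair of disjoint $1$-faces, hence a disconnected $1$-complex. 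For $k \geq 5$ I would apply combinatorial Alexander duality: since $C_k$ is triangle-free, the maximal cliques of $C_k$ are its edges, so $\Delta_{C_k}^{\vee} = \mathrm{Ind}(\overline{C_k})$ coincides with $C_k$ viewed as a $1$-complex. Thus $\tilde{H}_1(\Delta_{C_k}^\vee) \cong \mathbb{Z}$, and Alexander duality produces $\tilde{H}_{k-4}(\Delta_{C_k}) \cong \mathbb{Z}$, which is nontrivial homology strictly below the top dimension $k - 3$; this contradicts the fact that a shellable pure $d$-dimensional complex is homotopy equivalent to a wedge of $d$-spheres.

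The main obstacle in the forward direction is the purity check at the transition from the base to the cone phase: one must ensure that $F \cap \Sigma_{\mathrm{base}}$ is a full codimension-$1$ face of $\partial F$ rather than a strictly smaller subcomplex. Simpliciality of $v$ is the crucial input: it prevents any facet $G$ of $\Delta'$ from swallowing all of $N^{-}(v)$, which would otherwise force $F \cap \Sigma_{\mathrm{base}}$ to be a proper subcomplex of $2^G$ and break purity.
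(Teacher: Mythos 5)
Your proposal is correct, but it is a genuinely different route from the paper's: the paper does not prove Lemma \ref{lem:exposed} at all, it imports it (``adapted for our purposes'') from \cite{Doc}, where the equivalence comes out of the dictionary between shellings of the complex generated by the $F_j$ and linear quotient orderings of the quadratic monomial ideal attached to the erased edges, combined with the edge-erasure characterization of chordal graphs from \cite{CulGurSti} (and, in spirit, Fr\"oberg/Herzog--Hibi--Zheng type results). You instead give a self-contained two-directional argument: for chordal $\Rightarrow$ shellable you induct on a simplicial vertex $v$, splitting the facets into base facets (complements of erased edges at $v$) and cone facets (cone over the complex of $G-v$), and the one genuinely delicate point --- that every cone facet meets the union of base facets in the full codimension-one face $F\backslash\{v\}$ --- you correctly reduce to the fact that an erased edge cannot have both endpoints in the clique $N(v)$; for shellable $\Rightarrow$ chordal you localize at the link of the complement of an induced cycle (using Lemma \ref{lem:linkshell}) and kill shellability of the resulting complex via combinatorial Alexander duality against the cycle, using that a pure shellable complex has homology only in top dimension. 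Both directions check out (the $k=4$ case and the duality computation $\tilde{H}_{k-4}\neq 0$ are correct, and your coning/purity bookkeeping is sound), so the trade-off is: your argument buys an elementary, self-contained proof usable without the algebraic machinery, while the paper's citation buys brevity and situates the lemma in the chordal-clutter/linear-quotients literature. Two cosmetic remarks: you use $G$ both for the graph and for a facet of $\Delta'$, which should be disentangled, and the base case of the induction together with the degenerate cases (no erased edges at $v$, or no erased edges inside $[n-1]$) deserve one explicit sentence, though they are immediate.
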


\begin{lemma}\label{lem:graphVD}
Any shellable 1-dimensional simplicial complex (graph) $G$ is vertex decomposable. 
\end{lemma}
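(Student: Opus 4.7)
I will prove this by induction on the number of vertices of $G$. Since $G$ is a pure shellable $1$-dimensional complex, $G$ is a connected graph with no isolated vertex (as noted after Definition~\ref{defn:shellable}). The base cases $\Delta \in \{\emptyset, \{\emptyset\}\}$ and $G$ a single edge are vertex decomposable by convention, so I may assume $G$ has at least two edges.

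For any vertex $v$ of $G$, the link $\lk_G(v)$ is the $0$-dimensional complex on the neighbor set $N(v)$, and a trivial sub-induction shows that every $0$-dimensional complex is vertex decomposable (take any vertex as a decomposing vertex). By Lemma~\ref{lem:shedd}, $v$ is a shedding vertex of the pure complex $G$ iff every edge $\{v,w\}$ containing $v$ is adjacent to some edge of $\del_G(v)$; equivalently, every neighbor of $v$ has another neighbor in $G$, which in turn is equivalent to $\del_G(v)$ having no isolated vertex (and therefore being pure $1$-dimensional). Thus, if I can find $v$ such that additionally $G \setminus v$ is connected, then $\del_G(v)$ will be shellable and, by the main induction, vertex decomposable, and $v$ will be a decomposing vertex.

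To produce such a $v$ I split into two cases. If $G$ has a leaf $v$ with unique neighbor $u$, then $\deg(u) \geq 2$ since $G$ is connected with at least two edges; no other vertex of $G \setminus v$ becomes isolated (the only vertex adjacent to $v$ is $u$, and $u$ retains a neighbor), and removing a leaf of a connected graph preserves connectivity, so $v$ works. If instead every vertex of $G$ has degree at least two, I appeal to the standard graph-theoretic fact that every connected graph on at least two vertices contains a non-cut vertex (for example, a leaf of any DFS spanning tree), and take $v$ to be one such vertex: then $G \setminus v$ is connected, and every neighbor of $v$ still has a neighbor in $G \setminus v$. In either case $v$ is a decomposing vertex, completing the induction.

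The main obstacle is a mild one: observing that in the pure $1$-dimensional setting, the purity of $\del_G(v)$ and the shedding condition for $v$ coincide (both reducing to the requirement that every neighbor of $v$ has another neighbor), and then cleanly producing the desired $v$ via the leaf-versus-non-cut-vertex dichotomy. Nothing deeper seems to be required.
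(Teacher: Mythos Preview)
Your argument is correct and follows essentially the same strategy as the paper: pick a vertex whose removal keeps the graph connected, check that its link (a $0$-complex) and deletion (a smaller connected graph) are vertex decomposable, and verify the shedding condition. The only difference is packaging: the paper avoids your leaf/non-cut-vertex dichotomy by simply taking $v$ to be a leaf of any spanning tree of $G$, which is automatically a non-cut vertex and handles both of your cases at once; your more explicit verification of the shedding condition via Lemma~\ref{lem:shedd} is a point in your favor, since the paper leaves that step implicit.
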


\begin{proof}
Recall that a graph $G$ is shellable if and only if it is finite and connected. Let $G$ be a connected graph on vertex $[n]$. If $n=1$ then $G$ is a simplex and hence vertex decomposable.  Suppose $n \geq 2$ and let $T$ be a spanning tree of $G$.  Choose $v \in G$ to be a leaf vertex of $T$.  Then $G \backslash \{v\}$ is connected and $\text{lk}_G(v)$ is nonempty. The result follows by induction on $n$.
\end{proof}

\begin{lemma}\label{lem:smallVD}
Suppose $\Delta$ is a pure $d$-dimensional simplicial complex on at most $d+2$ vertices. Then $\Delta$ is vertex decomposable.
\end{lemma}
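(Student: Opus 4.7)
The plan is to induct on the dimension $d$, with the case $d \leq 0$ being an easy direct check (a shellable $0$-dimensional complex on at most two vertices is either a point or two disjoint points, both obviously vertex decomposable). For the inductive step, suppose $\Delta$ is a shellable $d$-dimensional complex on $n \leq d+2$ vertices, with no loops per the standing assumption of this section. If $n = d+1$ then purity forces $\Delta$ to be the full $d$-simplex on $V$, which is vertex decomposable. So assume $n = d+2$, in which case every facet of $\Delta$ is a $(d+1)$-subset of $V$, equivalently the complement of a single vertex. Define $W = \{v \in V : V \setminus \{v\} \text{ is a facet of } \Delta\}$, and note that since every vertex must lie in some facet, we have $|W| \geq 2$.

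The key claim is that any $v \in W$ serves as a decomposing vertex of $\Delta$. For the deletion, $V \setminus \{v\}$ is itself a facet of $\Delta$ not containing $v$, and every other facet $V \setminus \{v_i\}$ of $\Delta$ restricts to $V \setminus \{v, v_i\} \subset V \setminus \{v\}$, so $\delc(v)$ is the $d$-simplex generated by $V \setminus \{v\}$. This is vertex decomposable, and its unique facet $V \setminus \{v\}$ is a facet of $\Delta$, so $v$ is a shedding vertex. For the link, $\lkc(v)$ is a $(d-1)$-dimensional complex on the ground set $V \setminus \{v\}$ of size $d+1 = (d-1)+2$, and is shellable by Lemma~\ref{lem:linkshell}. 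The inductive hypothesis then implies that $\lkc(v)$ is vertex decomposable, completing the argument.

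The one subtlety, and the main technical point, is that the inductive hypothesis requires the link to be loopless. When $|W| \geq 3$ this holds cleanly: for every $u \in V \setminus \{v\}$ one can find some $v_i \in W \setminus \{u, v\}$ witnessing that $\{u, v\} \in \Delta$, so $u$ is a genuine vertex of $\lkc(v)$. When $|W| = 2$, say $W = \{v, v'\}$, the element $v'$ becomes a loop of $\lkc(v)$; but in this case $\lkc(v)$ has the single facet $V \setminus \{v, v'\}$ and is therefore a simplex on its vertex set, hence vertex decomposable directly from the base case of Definition~\ref{def:vd}. Treating this degenerate loop case separately is the main obstacle, and it is resolved by the observation that a single-facet link is automatically a simplex.
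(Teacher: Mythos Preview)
Your proof is correct and follows essentially the same approach as the paper: pick a vertex $v$ whose complement $V\setminus\{v\}$ is a facet (equivalently, pick a facet $F$ and take $v\notin F$), observe that $\delc(v)$ is then the full simplex on $V\setminus\{v\}$, and handle $\lkc(v)$ by induction on $d$. You are simply more explicit than the paper about verifying that $v$ is a shedding vertex, invoking Lemma~\ref{lem:linkshell} for shellability of the link, and disposing of the degenerate case where $\lkc(v)$ acquires a loop; the paper's version leaves these points implicit and uses $d=1$ (via Lemma~\ref{lem:graphVD}) rather than $d\le 0$ as the base case.
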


\begin{proof}
We use induction on $d$.  We see that the statement is true for $d=1$ by Lemma \ref{lem:graphVD}, since a graph without isolated vertices on at most $3$ vertices must be connected (and hence shellable).  For $d \geq 2$, if $\Delta$ is a simplex (meaning it only uses $d+1$ vertices) we are done. Otherwise we can pick a vertex $v$ such that there exists a facet $F$ with $v \in F$ and another facet $G$ with $v \notin G$. We then have that $\delc(v)$ is a simplex (generated by the single facet $G$) and hence is vertex decomposable. The link $\lkc(v)$ is a pure $(d-1)$-dimensional complex on at most $d+1$ vertices, and hence is vertex decomposable by the induction hypothesis.  To see that $v$ is a shedding vertex note that the single facet of $\delc(v)$ is $G$, which was a facet of $\Delta$ by assumption.  The claim follows.
\end{proof}

\begin{thm}\label{thm:shellableVD}
Suppose $\Delta$ is a pure shellable $d$-dimensional simplicial complex on $d+3$ vertices.  Then $\Delta$ is vertex decomposable.
\end{thm}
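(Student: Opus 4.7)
The plan is to exploit the chordal graph correspondence from Lemma \ref{lem:exposed}. Since $\Delta$ is shellable and $d$-dimensional on $[d+3]$, each facet is the complement of a $2$-subset of $[d+3]$, and Lemma \ref{lem:exposed} tells us that the graph $G$ on vertex set $[d+3]$ whose non-edges are precisely these complements is chordal. I will induct on $d$; the base case $d = 0$ (a discrete set of points is trivially vertex decomposable) and $d=1$ (handled by Lemma \ref{lem:graphVD}) are immediate.

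For the inductive step, pick a simplicial vertex $v$ of $G$, which exists because $G$ is chordal. The key claim is that $v$ is a decomposing vertex of $\Delta$. To see that $v$ is a shedding vertex I will apply Lemma \ref{lem:shedd}: a facet $F \in \stc(v)$ corresponds to a non-edge $\{a,b\}$ of $G$ with $v \notin \{a,b\}$, and a facet in $\delc(v)$ adjacent to $F$ corresponds to a non-edge of the form $\{v,c\}$ with $c \in \{a,b\}$. Thus the shedding condition for $v$ translates to: for every non-edge $\{a,b\}$ of $G$ avoiding $v$, at least one of $\{v,a\}$ or $\{v,b\}$ is also a non-edge. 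Taking the contrapositive, this says that any two neighbors of $v$ in $G$ are themselves joined by an edge, which is precisely the simplicial-vertex condition.

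It then remains to verify that $\delc(v)$ and $\lkc(v)$ are vertex decomposable. Since $v$ is a shedding vertex, $\delc(v)$ is pure of dimension $d$ on the ground set $[d+3]\setminus\{v\}$ of size $d+2$. Any pure $d$-dimensional complex on $d+2$ vertices is shellable (each facet is the complement of a single vertex, so the intersection of a facet with the union of the earlier facets is pure of dimension $d-1$ under any ordering), so Lemma \ref{lem:smallVD} gives that $\delc(v)$ is vertex decomposable. The link $\lkc(v)$ is a $(d-1)$-dimensional complex on at most $d+2$ vertices, shellable by Lemma \ref{lem:linkshell}; if its vertex set has at most $(d-1)+2 = d+1$ elements we apply Lemma \ref{lem:smallVD}, and otherwise we invoke the inductive hypothesis of the theorem at dimension $d-1$.

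The main obstacle is the equivalence between simplicial vertices of $G$ and shedding vertices of $\Delta$; this demands careful bookkeeping of the complement correspondence of Lemma \ref{lem:exposed} so that adjacency of facets in $\Delta$ lines up with the edge structure of $G$. Once this equivalence is in hand, the remaining steps (purity of $\delc(v)$, the easy shellability of a pure complex on $d+2$ vertices, and the induction on dimension for $\lkc(v)$) are all routine.
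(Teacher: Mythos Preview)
Your proof is correct and follows the same overall strategy as the paper: induct on $d$, use Lemma~\ref{lem:exposed} to translate to a chordal graph $G$, and take a simplicial vertex $v$ of $G$ as the decomposing vertex. The only substantive difference is the order in which you verify the three conditions. You establish that $v$ is a shedding vertex first, via Lemma~\ref{lem:shedd} and the clean equivalence with the simplicial-vertex condition; this immediately gives purity of $\delc(v)$, after which its shellability (and hence vertex decomposability via Lemma~\ref{lem:smallVD}) is the trivial observation that any pure $d$-complex on $d+2$ vertices is shellable. The paper instead treats $\delc(v)$ first, invoking chordality of $G\setminus\{v\}$ and Lemma~\ref{lem:exposed} again; your direct argument is arguably tidier here, since Lemma~\ref{lem:exposed} is stated for $(n-2)$-dimensional complexes on $n$ vertices and does not literally apply to $\delc(v)$. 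Your treatment of $\lkc(v)$ (splitting into the cases of $\leq d+1$ versus exactly $d+2$ vertices and applying Lemma~\ref{lem:smallVD} or the inductive hypothesis, respectively) matches the paper's.
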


\begin{proof}
We prove the statement by induction on $d$. If $d=1$ the claim follows from Lemma \ref{lem:graphVD}. Suppose $d \geq 1$ and let $\Delta$ be a $d$-dimensional complex with shelling order $F_1, F_2, \dots, F_j$.  Suppose $\Delta$ has vertex set $V$ and let $G$ be the simple graph on vertex set $V$ with \emph{non}edges $\{V \backslash F_i: F_i \in \Delta\}$, corresponding to the complements of facets. 

From Lemma \ref{lem:exposed} we have that $G$ is a chordal graph and hence admits a simplicial vertex $v$, so that the graph induced on its neighborhood $N_G(v)$ is a complete graph.  We first claim that $v$ is a shedding vertex of $\Delta$.  For a contradiction suppose $F$ is a facet of $\delc(v)$ such that $F \cup \{v\}$ is a facet of $\Delta$.  This implies that $\{i,j\} = V \backslash (F \cup \{v\})$ is missing as an edge in the complement graph $G$.  Since $v$ is simplicial this implies that $i$ or $j$ is not adjacent to $v$ in $G$.  Without loss of generality suppose $\{i,v\}$ is missing as an edge in $G$. This implies that $F^\prime = V \backslash \{i,v\}$ is a facet of $\Delta$. But $F \subsetneq F^\prime$ and $v \notin F^\prime$, a contradiction to the fact that $F$ is a facet of $\delc(v)$. 

Next note that $\delc(v)$ is a pure $d$-dimensional complex (since $v$ is a shedding vertex) on $d+2$ vertices. Hence by Lemma \ref{lem:smallVD} we have that $\delc(v)$ is vertex decomposable. Finally we have from Lemma \ref{lem:linkshell} that the link $\lkc(v)$ is a  shellable $(d-1)$-dimensional complex on at most $d+2$ vertices. By induction on $d$ we then have that $\lkc(v)$ is vertex decomposable.  The result follows.
\end{proof}

Hence for a $d$-dimensional simplicial complex on at most $d+3$ vertices the concepts of shellable, extendably shellable, shelling completable, and vertex decomposable are all equivalent.  We remark that Theorem \ref{thm:shellableVD} is tight in the sense that there exist $2$-dimensional complexes on 6 vertices that are shellable but not vertex decomposable \cite{MorTak}.

\section{Completing $k$-decomposable complexes and further thoughts}\label{sec:further}

Recall that Simon's conjecture posits that all shellable complexes are shelling completable, and in this paper we have shown the conjecture holds for the particular class of vertex decomposable complexes.  As alluded to in Section \ref{sec:intro}, we can ask the same question for complexes that properly sit in between these two classes. We first recall the relevant definitions.

\begin{defn}
A pure $d$-dimensional simplicial complex $\Delta$ is said to be \defi{$k$-decomposable} if $\Delta$ is a simplex, or $\Delta$ contains a face $F$ such that
\begin{enumerate}
    \item $\dim(F) \leq k$
    \item both $\delc(F)$ and $\lkc(F)$ are $k$-decomposable, and
    \item $\delc(F)$ is pure (and the dimensions stays same as that of $\Delta$).
\end{enumerate}
A face $F$ which satisfies the third condition is called a \defi{shedding face} of $\Delta$.
\end{defn}

The notion of $k$-decomposable interpolates between the notion of vertex decomposable (which is equivalent to $0$-decomposable in this language) and shellable (which can be seen to coincide with $d$-decomposable).

\begin{example}
Let $\Delta$ be the $2$-dimensional complex with facets
\[\{123,124,125,134,136,245,256,346,356,456\},\] (Example V6F10-6 from \cite{MorTak}). In \cite{MorTak} it is shown that $\Delta$ is not vertex decomposable, but one can check that it is $1$-decomposable using $15$ as a shedding face. 
\end{example}

The results in this paper imply that a $0$-decomposable complex is shelling completable, and Simon's conjecture posits that a $k$-decomposable complex is shelling completable.  As far we know it is an open question whether a $1$-decomposable complex is shelling completable.

To establish that a $1$-decomposable complex is shelling completable, one could try to generalize the argument given in the proof of Theorem \ref{thm:vdextend}, where the case of $0$-decomposable complexes was considered.  Recall that our strategy was to use induction on the number of vertices (and dimension), with the base case given by complexes that are \emph{full} (that is, are full skeleta of a simplex on their vertex set). It is not clear what the analogue for these complexes would be, even in the $1$-decomposable case. We leave this as a future project.

\section*{Acknowledgments}
 Much of this work was conducted under NSF-REU grant DMS-1757233 during the Summer 2020 Mathematics REU at Texas State University. The authors gratefully acknowledge the financial support of NSF and thank Texas State for providing a great working environment. We also thank Bruno Benedetti, Dylan Douthitt,  Jos\'e Samper, Jay Schweig, and Michelle Wachs for useful discussions and references.  We are also grateful to the anonymous referees for helpful corrections and suggestions.

\end{document}